\newdimen\myMargin
\newcommand{\B}{ R}
\newcommand{\Bx}{\B\left[x_1,\dots,x_n\right]}
\newcommand{\Fp}{\mathbb F_p}
\newcommand{\F}{\mathbb F}
\newcommand{\stb}[2]{#1_1,\dots,#1_{#2}}
\newcommand{\N}{\mathbb N}
\newcommand{\ve}[1]{\mathbf{#1}}
\newcommand{\Fx}{\F\left[x_1,\dots,x_n\right]}
\newcommand{\Fxv}{\F\left[\ve x\right]}
\newcommand{\monom}[2]{\ve{#1}^{\ve{#2}}}
\newcommand{\x}{(\ve x)}
\newtheorem{theorem}{Theorem}
\newtheorem{lemma}[theorem]{Lemma}
\newtheorem{conjecture}[theorem]{Conjecture}
\theoremstyle{remark}
\theoremstyle{remark}
\title{Some extensions of Alon's Nullstellensatz}
\author{\normalsize
  \begin{minipage}{0.3\linewidth}
    \large
    G\'eza K\'os \\
    \footnotesize
    Computer and Automation Research 
Institute, Hungarian Acad. Sci; \\
    Dept. of Analysis, E\"otv\"os 
Lor\'and Univ., Budapest \\
    \texttt{kosgeza@sztaki.hu} \\
    \normalsize
  \end{minipage}
  \qquad
  \begin{minipage}{0.3\linewidth}
    \large
    Tam\'as M\'esz\'aros  \\
    \footnotesize
    Dept. of Mathematics, \\
Central European University\\
 \texttt{Meszaros\_Tamas@ceu\_budapest.edu}
    \normalsize
  \end{minipage}
  \qquad
  \begin{minipage}{0.3\linewidth}
    \large
    Lajos R\'onyai \\
    \footnotesize
    Computer and Automation Research 
     Institute, Hungarian Acad. Sci. \\
    Dept. of Algebra, Budapest 
    Univ. of Technology and Economics \\
    \texttt{lajos@ilab.sztaki.hu}
    \normalsize
  \end{minipage}
  \vspace{0.5cm}
}
\begin{document}

\thispagestyle{empty}

\maketitle

\footnotetext{
\noindent
{\em Mathematics Subject Classification (MSC2010):} 05-XX, 05E40, 12D10. \\
{\em Key words and phrases:} Combinatorial Nullstellensatz, polynomial method,
 interpolation, multiset, zero divisor, multiple point, covering by hyperplanes. \\
Research supported in part by OTKA grants NK 72845, K77476, and K77778. 
}

{\em This paper is dedicated to professors
 K\'alm\'an Gy\H{o}ry, Attila Peth\H{o},
J\'anos Pintz, and Andr\'as S\'ark\"ozy, on the occassion of their (round)
birthdays.}

\begin{abstract}
Alon's combinatorial Nullstellensatz and
in particular the resulting nonvanishing criterion 
is one of the most powerful algebraic tools 
in combinatorics, with many important applications. 
The nonvanishing theorem has been extended in two directions. The first and
the third named authors proved a version allowing 
multiple points. Micha{\l}ek established a variant which is valid over 
arbitrary commutative rings, not merely over subrings of fields. In this
paper we
give new proofs of the latter two results and provide a common
generalization of them. 
As an application, we prove extensions 
of the theorem of Alon and F\"uredi on hyperplane coverings of discrete cubes.  
\end{abstract}


\section{Introduction}

Alon's combinatorial Nullstellensatz (Theorem 1.1 from \cite{Alon1}) 
and the resulting nonvanishing criterion (Theorem 1.2 from \cite{Alon1}
is one of the most powerful algebraic tools in combinatorics. It 
has several 
beautiful and strong applications, see \cite{CLM}, \cite{Felszeghy},
\cite{GT}, \cite{KarolyiI}, \cite{KarolyiR}, \cite{PS}, \cite{Sun} for some 
recent examples. 

Let $\F$ be a field, $S_1,S_2, \ldots , S_n$ be finite nonempty subsets 
of $\F$. Let $\Fxv=\Fx$ stand for the ring of polynomials over $\F$ in
variables $x_1, \ldots, x_n$.  Alon's theorem is a specialized, precise 
version of the Hilbertsche Nullstellensatz for the ideal $I(S)$ 
 of all polynomial 
functions
vanishing on the set $S=S_1\times S_2\times \cdots \times S_n\subseteq
\F^n$, and for the basis  $f_1,f_2,\ldots ,f_n$ of $I(S)$, where 
$$ f_i=f_i(x_i)=\prod_{s\in S_i}(x_i-s)\in \Fxv $$ 
for $i=1,\ldots ,n$. From 
this  a simple and widely applicable nonvanishing 
criterion (Theorem 1.2 in \cite{Alon1}) has been deduced. It provides a 
sufficient condition for a
polynomial $f\in \Fxv$ for not vanishing everywhere on 
$S$. 

\medskip

Herewith we consider extensions of the latter result in two directions. 
Before formulating these, we need first some notation and definitions.
Let $\N$ denote the set of nonnegative integers, and let $n$ be a
fixed positive integer. Throughout the paper $\B$ will denote a commutative 
ring (with 1, as usual), and $\F$ stands for a field.
Vectors of length $n$ are denoted by boldface
letters, for example $\ve s=(\stb sn)\in\B^n$ stands for points in the
space $\B^n$. For vectors $\ve a,\ve b\in \N^n$, the relation $\ve
a\ge\ve b$ etc. means that the relation holds at every component. We
use the same notation for constant
vectors. e.g. $\ve0=(0,0,\ldots,0)$ or $\ve1=(1,1,\ldots,1)$.

For $\ve w\in\N^n$, we write $\monom xw$ for the monomial
$x_1^{w_1}\dots x_n^{w_n}\in\Bx$.  If $\ve s\in\B^n$, then $(\ve
x-\ve s)^{\ve w}$ stands for the polynomial
$(x_1-s_1)^{w_1}\dots(x_n-s_n)^{w_n}$.

\bigskip

It is well known that for an arbitrary $\ve s\in \B^n$ we can express
a polynomial $f(\ve x)\in \Bx$ as
\begin{equation} \label{kifejt} f(\ve x) = \sum_{\ve u \in \N^n} f_{\ve u}(\ve
  s) (\ve x - \ve s)^{\ve u},
\end{equation} 
where the coefficients $f_{\ve u}(\ve s)\in \B$ are uniquely
determined by $f$, $\ve u$ and~$\ve s$.  In particular we have
$f_{\ve0}(\ve s)=f(\ve s)$ for all $\ve s \in\B^n$. 
Observe that if $u_1+\dots+u_n\ge\deg f$, then $f_{\ve u}=f_{\ve
  u}(\ve s)$ does not depend on $\ve s$.

\bigskip

Suppose now that  $S_1, S_2, \ldots ,S_n$ are nonempty finite subsets 
of $\B$, and assume  further that we have 
a positive integer {\em multiplicity} $m_i(s)$ attached to every element 
$s\in S_i$. This way we can view the pair $(S_i,m_i)$ as a multiset which
contains the element  $s\in  S_i$ precisely $m_i(s)$ times. We shall
consider the sum $d_i=d(S_i):=\sum\limits_{s\in S_i}m_i(s)$ as the size 
of the multiset $(S_i,m_i)$.
We put 
$S=S_1\times S_2\times \cdots S_n$.
For an element $\ve s =(\stb sn) \in S$ we set the multiplicity vector 
$m(\ve s)$ as $(m_1(s_1),\ldots ,m_n(s_n) )$, and write 
$|m(\ve s)|=m_1(s_1)+\cdots +m_n(s_n)$. 

\medskip

We formulate first a version of Alon's powerful nonvanishing theorem 
(Theorem 1.2 in \cite{Alon1}) for multiple points over fields.  From this
one can obtain Alon's result by setting $m_i(s)=1$ identically.

\begin{theorem} \label{nonvanish}
Let $\F$ be a field, $f=f(x_1,\ldots ,x_n)\in \F [x_1,\ldots ,x_n]$ be a
polynomial of degree $\sum\limits_{i=1}^n t_i$, where each $t_i$ is a nonnegative
integer. Assume, that the coefficient in $f$ of the monomial 
$x_1^{t_1}x_2^{t_2}\cdots x_n^{t_n}$ is nonzero.  Suppose further that
$(S_1, m_1), (S_2,m_2), \ldots ,(S_n, m_n)$ are multisets of $\F$ such that 
for the size $d_i$ of $(S_i,m_i)$ we have $d_i>t_i$ ($i=1, \ldots , n$).
Then there exists a point $\ve s =(s_1, \ldots, s_n)\in S_1\times \cdots
\times S_n$ and an exponent vector $\ve u =(u_1, \ldots, u_n)$ with 
$u_i<m_i(s_i)$ for each $i$, such that $f_{\ve u}(\ve s) \not = 0$.
\end{theorem}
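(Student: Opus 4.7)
My plan is to follow the architecture of Alon's original proof, generalised so that pointwise vanishing is replaced by the vanishing of Taylor coefficients of order bounded by the multiplicities.

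\textbf{Step 1 (reduction).} Set $f_i(x_i)=\prod_{s\in S_i}(x_i-s)^{m_i(s)}\in\F[x_i]$, a monic polynomial of degree $d_i$, and reduce $f$ by successive division in $x_i$ by each $f_i$. This produces $\tilde f\in\Fxv$ with $\deg_{x_i}\tilde f<d_i$ for every $i$ and with $f-\tilde f\in(f_1,\ldots,f_n)$. Each rewriting step replaces a power $x_i^{d_i}$ by a polynomial in $x_i$ of strictly smaller degree, so it strictly decreases the total degree of every monomial it touches. Since $t_i<d_i$ for every $i$, the monomial $x_1^{t_1}\cdots x_n^{t_n}$ is already in reduced form and realises the total degree $\deg f$; hence its coefficient is preserved under the reduction, and $\tilde f\not\equiv 0$.

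\textbf{Step 2 (invariance of the Taylor coefficients).} Fix $\ve s\in S$ and $\ve u$ with $u_i<m_i(s_i)$ for every $i$. I claim every $h\in(f_1,\ldots,f_n)$ satisfies $h_{\ve u}(\ve s)=0$. Write $h=\sum_i g_if_i$; since $f_i$ is divisible by $(x_i-s_i)^{m_i(s_i)}$, every monomial $(\ve x-\ve s)^{\ve v}$ appearing in the expansion~\eqref{kifejt} of $g_if_i$ around $\ve s$ has $v_i\ge m_i(s_i)$. Consequently $(g_if_i)_{\ve u}(\ve s)=0$ for each $i$ (the $i$-th condition is violated by the chosen $\ve u$), and summing yields $h_{\ve u}(\ve s)=0$. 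Taking $h=f-\tilde f$ gives $f_{\ve u}(\ve s)=\tilde f_{\ve u}(\ve s)$, so it is enough to exhibit $\ve s$ and $\ve u$ with $\tilde f_{\ve u}(\ve s)\ne 0$.

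\textbf{Step 3 (Hermite interpolation).} I would prove, by induction on $n$, that if $g\in\Fxv$ has $\deg_{x_i}g<d_i$ for every $i$ and satisfies $g_{\ve u}(\ve s)=0$ for every $\ve s\in S$ and every $\ve u$ with $u_i<m_i(s_i)$, then $g\equiv 0$. Applied to $g=\tilde f$ this contradicts Step~1 and completes the proof. For $n=1$, $\prod_{s\in S_1}(x-s)^{m_1(s)}$ (of degree $d_1$) divides $g(x)$ (of degree $<d_1$), forcing $g=0$. For the inductive step, write $g=\sum_{j=0}^{d_1-1}g_j(x_2,\dots,x_n)\,x_1^j$; fixing $\ve s'=(s_2,\dots,s_n)$ and $\ve u'=(u_2,\dots,u_n)$ with $u_i<m_i(s_i)$, the univariate polynomial $P(x_1)=\sum_j (g_j)_{\ve u'}(\ve s')\,x_1^j$ of degree $<d_1$ has $u_1$-th Hasse derivative at $s_1$ equal to $g_{(u_1,\ve u')}(s_1,\ve s')$. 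These vanish by hypothesis for every $s_1\in S_1$ and $u_1<m_1(s_1)$, so the $n=1$ case gives $P\equiv 0$, hence $(g_j)_{\ve u'}(\ve s')=0$ for each $j$; the inductive hypothesis applied to each $g_j$ finishes the argument.

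The chief obstacle is conceptual rather than technical: one must identify the correct multiplicity-analogue of ``$f$ vanishes on $S$'' as the simultaneous vanishing of the Taylor coefficients $f_{\ve u}(\ve s)$ for $u_i<m_i(s_i)$. Once this is in place the proof splits cleanly into the algebraic reduction (Steps~1--2) and the multivariate Hermite interpolation lemma (Step~3); a pleasant bonus is that the Taylor coefficients in~\eqref{kifejt} behave well in arbitrary characteristic, so no factorial denominators intrude anywhere.
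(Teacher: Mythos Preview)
Your proof is correct but takes a different route from the paper's. The paper derives Theorem~\ref{nonvanish} as the field case of Theorem~\ref{modulo_multi}, proved in Section~2 by an explicit coefficient identity: univariate Hermite interpolation (Lemmas~\ref{lem:h_base_pol}--\ref{lem:fo_eho}) produces weights $\alpha(\ve s,\ve u)$ with
\[
\sum_{\ve s\in S}\sum_{\ve u<m(\ve s)}\alpha(\ve s,\ve u)\,f_{\ve u}(\ve s)=c_{\ve t},
\]
so some $f_{\ve u}(\ve s)$ must be nonzero. Your argument instead extends Alon's original reduce-then-identify scheme to the multiset setting: reduce $f$ modulo the monic polynomials $f_i$, observe that the ideal $(f_1,\ldots,f_n)$ annihilates every admissible Taylor coefficient, and then prove (Step~3) the uniqueness lemma that a polynomial with $\deg_{x_i}<d_i$ and all such Taylor coefficients zero must vanish identically. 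The paper in fact carries out exactly this style of argument in Section~3, but only for simple multiplicities (Theorems~\ref{theorem 2} and~\ref{theorem 1}); your Step~3 is precisely the multiset upgrade of that uniqueness statement. The paper's identity buys an explicit formula for $c_{\ve t}$ in terms of the $f_{\ve u}(\ve s)$ and works verbatim over any commutative ring in which the differences $s-s^*$ are units; your approach is closer in spirit to Alon's original proof and arguably more transparent, and with the $n=1$ base case replaced by Lemma~\ref{lem:h_uniq} it would likewise extend to that ring setting.
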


\medskip

Theorem \ref{nonvanish} was first proved in \cite{KR}.
In another direction, over an arbitrary  commutative ring $\B$ Micha{\l}ek 
proved the
following extension of the nonvanishing theorem:

\begin{theorem}\label{theorem 1}
Let $\B$ be a commutative ring, and let $f\x$ be a
polynomial in $\Bx$. Suppose the degree $deg(g)$ of $f$ is
$\sum_{i=1}^n t_i$, where $t_i$ is a nonnegative integer, and
suppose the coefficient of $\prod_{i=1}^n x_i^{t_i}$ in $f$ is
nonzero. Suppose further that  $S_1,\dots,S_n$ are subsets of $\B$ with
$|S_i|>t_i$, and with the property that if $s\not =s^* \in S_i$, then
$s-s^*$ is a unit in $\B$. Then there exists a vector  
$\ve s \in S=S_1\times \dots \times S_n$, such that

\[ f(\ve s) \neq 0.\]

\end{theorem}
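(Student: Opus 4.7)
My plan is to adapt Alon's original reduction argument to the ring setting. Set $g_i(x_i) = \prod_{s \in S_i}(x_i - s) \in \Bx$; each $g_i$ is monic of degree $|S_i|$ in $x_i$, so long division in $\B[x_i]$ is available over $\B$ regardless of zero divisors. Iteratively replacing each occurrence of $x_i^{|S_i|}$ by $x_i^{|S_i|} - g_i(x_i)$ (which has $x_i$-degree at most $|S_i|-1$, hence strictly smaller total degree than the monomial being rewritten) yields $\tilde f = f - \sum_{i=1}^n g_i h_i$ with $\deg_{x_i} \tilde f < |S_i|$ for every $i$. Since each rewriting step strictly decreases the total degree of the monomial it touches, $\deg \tilde f \le \deg f = \sum_i t_i$, and the top-degree monomial $\prod_i x_i^{t_i}$ is never touched (it already has $x_i$-degree $t_i < |S_i|$ in every variable), so its coefficient in $\tilde f$ is still the same nonzero element of $\B$. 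Moreover, since $g_i(s_i) = 0$ for $s_i \in S_i$, we have $\tilde f(\ve s) = f(\ve s)$ for every $\ve s \in S$.

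Arguing by contradiction, suppose $f$ vanishes on all of $S$. Then so does $\tilde f$, and it remains to show that any polynomial $h \in \Bx$ with $\deg_{x_i} h < |S_i|$ for all $i$ that vanishes on $S$ must be the zero polynomial. I would prove this by induction on $n$. For the base case $n=1$, the factor theorem over $\B$ gives $h = (x_1 - s_1) q$ for a chosen $s_1 \in S_1$; evaluating at another point $s_2 \in S_1$ yields $(s_2 - s_1) q(s_2) = 0$, and since $s_2 - s_1$ is a unit, $q(s_2) = 0$. Iterating this peeling, $g_1$ divides $h$, but then the bound $\deg h < |S_1|$ (together with $g_1$ being monic of degree $|S_1|$) forces $h = 0$. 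For the inductive step, expand $h = \sum_{k < |S_n|} c_k(x_1, \dots, x_{n-1}) x_n^k$; fixing $\ve s' \in S_1 \times \cdots \times S_{n-1}$ and evaluating gives a univariate polynomial in $x_n$ of $x_n$-degree $< |S_n|$ that vanishes on $S_n$, so the base case forces each $c_k(\ve s') = 0$. The inductive hypothesis applied to every $c_k$ then yields $h = 0$.

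Applying this claim to $\tilde f$ gives $\tilde f = 0$, contradicting the nonzero coefficient of $\prod_i x_i^{t_i}$ preserved in the first step. The delicate point is the base case of the auxiliary claim, since the usual ``a nonzero polynomial of degree $d$ has at most $d$ roots'' argument breaks over a commutative ring with zero divisors; the substitute is iterated application of the factor theorem, and the unit-difference hypothesis on each $S_i$ is precisely what makes the iteration go through. Apart from that step the proof is essentially the standard reduction argument behind Alon's Nullstellensatz.
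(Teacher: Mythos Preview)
Your proof is correct and follows the same overall architecture as the paper's alternative proof in Section~3: reduce $f$ modulo the monic polynomials $g_i(x_i)=\prod_{s\in S_i}(x_i-s)$ to obtain a remainder $\tilde f$ with $\deg_{x_i}\tilde f<|S_i|$ for all $i$, observe that the reduction preserves both the values on $S$ and the coefficient of $\prod_i x_i^{t_i}$, and then argue that a polynomial of such bounded partial degrees vanishing on $S$ must be identically zero.

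The one place where you diverge from the paper is in the proof of that last vanishing claim. The paper establishes it via Lagrange interpolation: the indicator functions $f_{(\ve s)}$ of the points of $S$ are written explicitly as $R$-linear combinations of the monomials in $M=\{\ve x^{\ve w}:w_i\le |S_i|-1\}$, and a rank count for the free $R$-module of functions $S\to R$ then forces $M$ to be $R$-linearly independent as functions on $S$. You instead prove the claim by induction on $n$, with the base case handled by iterated application of the factor theorem, using the unit-difference hypothesis to cancel the linear factors at each step. Your route is more elementary and makes the role of the unit-difference hypothesis very transparent; the paper's interpolation argument, on the other hand, yields the slightly stronger structural statement (Theorem~\ref{theorem 2}) that $g_1,\ldots,g_n$ generate $I(S)$, and in fact form a Gr\"obner basis.
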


In the case $\B=\F$, when we work over a field, the above result
specializes to Alon's nonvanishing theorem; note that if $\B$ is a field, 
then $s-s^*$ is always a unit, whenever $s$ and $s^*$ are different. 
We give two new proofs of Theorem \ref{theorem
1}. The first one will be on the basis of an identity obtained from an
interpolation argument. The second proof follows closely the original line
of reasoning by Alon.  
In fact, for a subset $S$ of $\B^n$ let us denote the set of polynomials
from $\Bx$ vanishing at all $s\in S$ by $I(S)$. It is easy to see that $I(S)$
is an ideal of $\Bx$.
Now Theorem \ref{theorem 1} will be a simple consequence 
of the following  result, which can be considered as an extension of Alon's 
Nullstellensatz.

\begin{theorem}\label{theorem 2}
Let $\B$ be a commutative ring, and let $f\x$ be a
polynomial from $\Bx$. Let $S_1,\dots,S_n$ be nonempty finite subsets of
$\B$ with the property that if $s\not= s^* \in S_i$, then $s-s^*$ is a unit
in $\B$, $S=S_1\times \dots \times S_n$ and define
$g_i(x_i)=\prod_{s\in S_i} (x_i-s)$. 
Then for every polynomial $f\x\in \Bx$ there are polynomials
$h_1,\dots, h_n,r \in \Bx$ such that $deg h_i \leq deg f- |S_i|$ 
for all $i$, the degree of $r$ is less then $|S_i|$ in every
$x_i$, for which
\[f\x =r\x + \sum_{i=1}^{n} h_i\x g_i(x_i).\]
Moreover, $f\in I(S)$ if and only if $r$ is identically zero, hence  
$g_1(x_1),\ldots ,g_n(x_n)$ is a basis of $I(S)$. 
\end{theorem}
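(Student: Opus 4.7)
The plan is to build the decomposition by iterated univariate division, and then to establish the moreover clause via a multivariate Vandermonde argument that crucially invokes the unit hypothesis on the pairwise differences within each $S_i$.

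First, I observe that $g_i(x_i)$ is monic of degree $|S_i|$ in $x_i$. Since division by a monic polynomial is available over any commutative ring, I divide $f$ by $g_1(x_1)$, viewed as a polynomial in $x_1$ with coefficients in $\B[x_2,\dots,x_n]$, to obtain $f = h_1 g_1 + f^{(1)}$ with $\deg_{x_1} f^{(1)} < |S_1|$ and $\deg h_1 \le \deg f - |S_1|$. Iterating, I next divide $f^{(1)}$ by $g_2(x_2)$ in the variable $x_2$, and so on. The key observation is that division by $g_j(x_j)$ does not increase the $x_i$-degree for $i \ne j$, so after $n$ such steps I arrive at a representation $f = r + \sum_{i=1}^n h_i g_i$ with $\deg_{x_i} r < |S_i|$ for every $i$ and $\deg h_i \le \deg f - |S_i|$.

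For the moreover clause, one direction is clear: each $g_i(x_i)$ vanishes on $S$, so if $r=0$ then $f \in I(S)$. For the converse, assume $f \in I(S)$; then $r$ also vanishes on $S$, and the task reduces to a sublemma: any polynomial $r \in \Bx$ with $\deg_{x_i} r < |S_i|$ for every $i$ and vanishing on $S$ is identically zero. I would prove this by induction on $n$. For $n=1$, if $r(x_1) = \sum_{k=0}^{|S_1|-1} c_k x_1^k$ vanishes on $S_1 = \{s^{(1)},\dots,s^{(|S_1|)}\}$, then the system $r(s^{(j)})=0$ may be written as $V\ve c = \ve 0$, where $V$ is the Vandermonde matrix with $(j,k)$-entry $(s^{(j)})^{k-1}$. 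Its determinant equals $\prod_{j<\ell}(s^{(\ell)}-s^{(j)})$, which by hypothesis is a product of units, hence a unit in $\B$; consequently $V$ is invertible and $\ve c = \ve 0$. For $n>1$, I view $r$ as a polynomial in $x_1$ whose coefficients lie in $\B[x_2,\dots,x_n]$; for each fixed tuple in $S_2\times\cdots\times S_n$ the one-variable case forces each of these coefficients to vanish at that tuple, and the inductive hypothesis applied to each coefficient completes the argument.

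The main obstacle is exactly this Vandermonde step. Over an arbitrary commutative ring one cannot simply invoke the principle that a nonzero polynomial of degree less than $k$ has at most $k-1$ roots; it is precisely here that the hypothesis on units becomes indispensable. Once the sublemma is in place, the equivalence $f \in I(S) \iff r = 0$ follows immediately, and substituting $r=0$ gives $f = \sum_{i=1}^n h_i g_i$, showing that $g_1,\dots,g_n$ generate the ideal $I(S)$.
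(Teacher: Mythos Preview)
Your argument is correct. Both the iterated monic-division step and the Vandermonde induction are sound over an arbitrary commutative ring, and you have correctly identified that the unit hypothesis enters precisely through the invertibility of the Vandermonde matrix.

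Your route differs from the paper's in presentation, though the underlying mechanism is closely related. For the decomposition, the paper performs Gr\"obner-style reduction (repeatedly substituting $x_i^{d_i}\mapsto x_i^{d_i}-g_i(x_i)$), whereas you do successive univariate division; these amount to the same thing but your formulation makes the degree bounds on the $h_i$ slightly more transparent. For the ``moreover'' clause, the paper argues module-theoretically: it realizes the space $V$ of $R$-valued functions on $S$ as a free $R$-module of rank $|S|$, exhibits the Lagrange interpolation polynomials $f_{(\ve s)}$ as an explicit free basis written in terms of the monomials in $M=\{\ve x^{\ve w}:w_i<d_i\}$, and concludes by a rank count that $M$ is itself $R$-linearly independent as a set of functions on $S$. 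Your Vandermonde-plus-induction argument reaches the same conclusion more directly, without introducing $V$ or the Lagrange polynomials. The paper's packaging has the side benefit of making the Gr\"obner-basis remark immediate; your version is more elementary and self-contained.
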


\bigskip
We have the following common generalization of Theorems \ref{nonvanish}
and \ref{theorem 1}:

\begin{theorem}\label{modulo_multi}
Let $R$ be a ring, $f=f(x_1,\ldots ,x_n)\in R [x_1,\ldots ,x_n]$ be a
polynomial of degree $\sum\limits_{i=1}^n t_i$, where each $t_i$ is a
nonnegative integer. Assume, that the coefficient in $f$ of the monomial 
$x_1^{t_1}x_2^{t_2}\cdots x_n^{t_n}$ is nonzero.  Suppose further that
$(S_1, m_1), (S_2,m_2), \ldots ,(S_n, m_n)$ are multisets of $R$ such that
for the size $d_i$ of $(S_i,m_i)$ we have $d_i>t_i$ ($i=1, \ldots , n$), 
and for each $i$ any nonzero element $s-s^*$ from $S-S$ is a unit in $R$.
Then there exists a point $\ve s =(s_1, \ldots, s_n)\in S_1\times \cdots
\times S_n$ and an exponent vector $\ve u =(u_1, \ldots, u_n)$ with
$u_i<m_i(s_i)$ for each $i$, such that $f_{\ve u}(\ve s) \not = 0$.
\end{theorem}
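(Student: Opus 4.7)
The plan is to follow the template of Theorem~\ref{theorem 2} using the polynomials $g_i(x_i)=\prod_{s\in S_i}(x_i-s)^{m_i(s)}$, each monic in $x_i$ of degree $d_i$. A direct check from the definitions shows that every $g_i$ lies in the ideal
\[ J=\{p\in R[\ve x]:p_{\ve u}(\ve s)=0\text{ for every }\ve s\in S\text{ and every }\ve u<m(\ve s)\}, \]
so membership in $J$ is the appropriate replacement for vanishing on $S$. I would produce a decomposition $f=r+\sum_{i=1}^n h_ig_i$ with $\deg_{x_i}r<d_i$ and $\deg h_i\le\deg f-d_i$, and then show $r\ne0$ to conclude $f\notin J$, which is exactly the sought-for existence statement.

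Step one is the division. Since each $g_i$ is monic in $x_i$, I would divide $f$ by $g_1$ in $x_1$, the resulting remainder by $g_2$ in $x_2$, and so on, exactly as in the proof of Theorem~\ref{theorem 2}. Once this decomposition is obtained, the leading-coefficient argument from Alon's original proof goes through unchanged: in each product $h_ig_i$ the monomial $\prod_k x_k^{t_k}$ can only arise by multiplying the leading term $x_i^{d_i}$ of $g_i$ by a monomial of $h_i$ with $x_i$-exponent $t_i-d_i$, which is negative by assumption. Hence $\sum h_ig_i$ contributes zero to the coefficient of $\prod_k x_k^{t_k}$, so this coefficient in $r$ equals the nonzero one in $f$; in particular $r\ne 0$.

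The heart of the matter, and the main obstacle, is the uniqueness statement: if $r\in J$ and $\deg_{x_i}r<d_i$ for every $i$, then $r=0$. This is where the unit-difference hypothesis must enter and where the multiplicity version of Hermite interpolation has to be made to work over a general commutative ring. I would first settle the univariate case $n=1$ by induction on $|S_1|$: pick $s_0\in S_1$, use $r_u(s_0)=0$ for $u<m_1(s_0)$ to factor $r=(x_1-s_0)^{m_1(s_0)}q$; then for any $s\in S_1\setminus\{s_0\}$ expand
\[ (x_1-s_0)^{m_1(s_0)}=\bigl((x_1-s)+(s-s_0)\bigr)^{m_1(s_0)}, \]
match the coefficients of $(x_1-s)^j$ in $r$, and induct on $j$ using that $s-s_0$ is a unit to conclude $q_u(s)=0$ for all $u<m_1(s)$. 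The inductive step on $|S_1|$ then forces $q=0$, hence $r=0$.

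For general $n$, I would induct on the number of variables. Viewing $r$ as a polynomial in $x_1$ over $R'=R[x_2,\dots,x_n]$ and expanding around $s_1\in S_1$, the coefficient $r^{(u_1,s_1)}(x_2,\dots,x_n)$ of $(x_1-s_1)^{u_1}$ lies in $R'$, has $x_j$-degree less than $d_j$ for each $j\ge 2$, and the hypothesis $r_{\ve u}(\ve s)=0$ for $\ve u<m(\ve s)$ precisely encodes that $r^{(u_1,s_1)}$ vanishes at every $(s_2,\dots,s_n)\in S_2\times\dots\times S_n$ to order $(m_2(s_2),\dots,m_n(s_n))$ whenever $u_1<m_1(s_1)$. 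The inductive hypothesis in $n-1$ variables then forces $r^{(u_1,s_1)}=0$ for every such $u_1$, i.e., $(x_1-s_1)^{m_1(s_1)}$ divides $r$ in $R'[x_1]$ for every $s_1\in S_1$. Applying the univariate statement over $R'$, where every element $s-s^*\in R$ with $s\ne s^*\in S_1$ remains a unit, yields $g_1(x_1)\mid r$; combined with $\deg_{x_1}r<d_1$ this gives $r=0$, completing the proof of Theorem~\ref{modulo_multi}.
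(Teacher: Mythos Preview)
Your argument is correct and follows a genuinely different route from the paper's proof of Theorem~\ref{modulo_multi}. The paper proceeds via Hermite interpolation: it builds basis polynomials $h^{(s,u)}$ (Lemma~\ref{lem:h_base_pol}), establishes interpolation (Lemma~\ref{lem:h_int}), extracts from them coefficients $\alpha(s,u)$ (Lemma~\ref{lem:fo_eho}), and then verifies the identity
\[
\sum_{\ve s\in S}\ \sum_{\ve u<m(\ve s)}\alpha(\ve s,\ve u)\,f_{\ve u}(\ve s)=c_{\ve t},
\]
from which the conclusion is immediate. You instead extend the paper's Section~3 strategy (Theorem~\ref{theorem 2} followed by Alon's degree argument) to the multiset setting: divide by the monic $g_i$, retain the coefficient of $\prod_k x_k^{t_k}$ in the remainder $r$ via the total-degree bound $\deg h_i\le\deg f-d_i$, and then prove that a remainder in $J$ with $\deg_{x_i}r<d_i$ for every $i$ must vanish. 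Your univariate step is essentially the paper's Lemma~\ref{lem:h_uniq}; but you then finish by induction on the number of variables rather than by constructing interpolation weights, and this bypasses Lemmas~\ref{lem:h_base_pol}--\ref{lem:fo_eho} entirely. The paper's route is more constructive, since it exhibits $c_{\ve t}$ explicitly as an $R$-linear combination of the values $f_{\ve u}(\ve s)$; your route is more structural and yields, as a by-product, the multiset analogue of Theorem~\ref{theorem 2} (the $g_i$ generate $J$, and every $f$ has a unique reduced representative modulo $J$), which the paper does not formulate.
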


\bigskip

In the next section we prove Theorem 
and~\ref{modulo_multi}, which implies Theorems 
\ref{nonvanish}, and \ref{theorem 1}.
The proof will be based on an argument extending  
univariate Hermite  interpolation to our setting.
In Section 3 we give an alternative proof for
Theorem~\ref{theorem 1}, wich will follow from  Theorem~\ref{theorem 2}.
This line of reasoning is an adaptation of Alon's original proofs 
from  \cite{Alon1}. In Section 4 we give two applications. These will be
extensions of a theorem of Alon and F\"uredi on almost covering a discrete 
hypercube by hyperplanes. In Section 5 some concluding remarks are given.

\section{Proof of Theorem \ref{modulo_multi} }

\begin{lemma}
  \label{lem:h_uniq}
  Let $(S,m)$ be a nonempty multiset in a commutative ring $R$ such
  that all nonzero elements $s-s^*$ in $S-S$ are units, and let
  $g(x)=\prod\limits_{s\in S}(x-s)^{m(s)}$.  For all polynomials
  $f(x)\in R[x]$, the following statements are equivalent:

  (a) $f_u(s)=0$ for every $s\in S$ and $0\le u<m(s)$;

  (b) the polynomial $(x-s)^{m(s)}$ divides $f(x)$ for every $s\in S$;

  (c) $g(x)$ divides $f(x)$.
\end{lemma}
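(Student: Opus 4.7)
The plan is to handle the two equivalences $(a) \Leftrightarrow (b)$ and $(b) \Leftrightarrow (c)$ separately. The former is a single-point Taylor-expansion argument that does not use the unit hypothesis. The latter is where the assumption on differences in $S-S$ enters, via pairwise comaximality of the ideals $\bigl((x-s)^{m(s)}\bigr)$ in $R[x]$.

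For $(a) \Leftrightarrow (b)$, I would fix $s \in S$ and use the expansion $f(x) = \sum_{u \ge 0} f_u(s)(x-s)^u$ given by (\ref{kifejt}) specialized to one variable. Since $(x-s)^{m(s)}$ is monic, polynomial long division in $R[x]$ gives a unique representation $f(x) = q(x)(x-s)^{m(s)} + \rho(x)$ with $\deg \rho < m(s)$; comparing coefficients with the expansion identifies $\rho(x) = \sum_{u<m(s)} f_u(s)(x-s)^u$. Hence $(x-s)^{m(s)} \mid f$ iff $f_u(s)=0$ for all $u < m(s)$, and quantifying over $s \in S$ yields the equivalence.

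For $(b) \Leftrightarrow (c)$, the direction $(c) \Rightarrow (b)$ is immediate since each $(x-s)^{m(s)}$ divides $g$. The content lies in $(b) \Rightarrow (c)$, and the main obstacle is a comaximality claim: for distinct $s, s^* \in S$, the ideals $\bigl((x-s)^{m(s)}\bigr)$ and $\bigl((x-s^*)^{m(s^*)}\bigr)$ should generate all of $R[x]$. This is exactly where the hypothesis on $S-S$ is used: since $(x-s)-(x-s^*) = s^*-s$ is a unit in $R$, we have $(x-s,\, x-s^*) = R[x]$, and powers of comaximal ideals remain comaximal. The Chinese Remainder Theorem then yields $\bigcap_{s \in S}\bigl((x-s)^{m(s)}\bigr) = \prod_{s \in S}\bigl((x-s)^{m(s)}\bigr) = (g(x))$, and $(b)$ is precisely the statement that $f$ belongs to this intersection. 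The unit hypothesis is genuinely necessary here: over $R = \Z/4\Z$ with $S=\{0,2\}$ and all multiplicities equal to $1$, both $x$ and $x-2$ divide $f(x) = 2x$, yet $g(x) = x(x-2)$ does not, because $0-2$ fails to be a unit and the coprimality argument collapses.
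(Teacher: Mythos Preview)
Your proof is correct. The treatment of $(a)\Leftrightarrow(b)$ matches the paper (which simply calls this direction trivial), and your route for $(b)\Rightarrow(c)$ is genuinely different: the paper argues by induction on $|S|$, peeling off one point $s_0$, writing $f=(x-s_0)^{m(s_0)}f^*$, and checking directly that $f^*$ still satisfies $(a)$ on $S\setminus\{s_0\}$ --- the unit hypothesis is used to cancel $(s-s_0)^{m(s_0)}$ in the identity $f_u(s)=(s-s_0)^{m(s_0)}f^*_u(s)$. You instead package the same unit hypothesis into comaximality of the ideals $\bigl((x-s)^{m(s)}\bigr)$ and invoke the Chinese Remainder Theorem to identify $\bigcap_s\bigl((x-s)^{m(s)}\bigr)$ with $(g)$. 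Your approach is cleaner and more structural, and it makes transparent that the lemma is really an instance of CRT for pairwise comaximal principal ideals; the paper's inductive argument is more hands-on and self-contained, avoiding any appeal to general ideal theory. The counterexample over $\Z/4\Z$ is a nice addition that neither approach needs but that clarifies the role of the hypothesis.
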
  

\begin{proof}
  The relations $(a)\Leftrightarrow(b)$ and $(c)\Rightarrow(b)$ are
  trivial. To prove $(a,b)\Rightarrow(c)$, apply an induction on
  $|S|$. The initial case $|S|=1$ is trivial.

  Let $n\ge2$ and assume the statement of the Lemma for $|S|=n-1$. Choose
  an $s_0\in S$ arbitrarily. By (b), there is a polynomial $f^*\in
  R[x]$ such that $f(x)=(x-s_0)^{m(s_0)}f^*(x)$. Let
  $g^*(x)=\prod\limits_{s\in S\setminus\{s_0\}}(x-s)^{m(s)}$.  We have
  to prove that $g^*$ divides $f^*$.

  First we show that $f^*_u(s)=0$ for every $s\in S\setminus\{s_0\}$
  and $0\le u<m(s)$. Suppose the contrary, and take a pair $(s,u)$ for
  which $f^*_u(s)\ne0$ and $u$ is minimal,
  i.e. $f^*_0(s)=f^*_1(s)=\ldots=f^*_{u-1}(s)=0$. Then
  $$
  f_u(s) = \Big( (x-s_0)^{m(s_0)} f^*(x) \Big)_u(s)
  = \sum_{v=0}^u \Big((x-s_0)^{m(s_0)}\Big)_v(s) \cdot f^*_{u-v}(s)
  = (s-s_0)^{m(s)} f^*_u(s).
  $$
  Since $s-s_0$ is a unit in $R$ and $f^*_u(s)\ne0$, this contradicts
  $f_u(s)=0$.

  So we have $f^*_u(s)=0$ for every $s\in S\setminus\{s_0\}$ and $0\le
  u<m(s)$. By the induction hypothesis, $f^*$ is divisible by $g^*$.
\end{proof}


\begin{lemma}
  \label{lem:h_base_pol}
  Let $(S,m)$ be a nonempty multiset in a commutative ring $R$ such
  that all nonzero elements $s-s^*$ in $S-S$ are units, and let
  $s_0\in S$ and $0\le u_0<m(s_0)$. Then there exists a polynomial
  $h^{(s_0,u_0)}(x)\in R[x]$ with $\deg h^{(s_0,u_0)}<d(S)$ such that
  for every $s\in S$ and $0\le u<m(s)$ we have
  $$
  h^{(s_0,u_0)}_u(s) = 
  \begin{cases}
    1 & \text{if $s=s_0$ and $u=u_0$;} \\
    0 & \text{otherwise.} \\
  \end{cases}
  $$
\end{lemma}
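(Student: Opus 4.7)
The plan is to construct $h^{(s_0,u_0)}$ by a Hermite-style interpolation argument: force the vanishing conditions at the other points by building in an appropriate factor, and then solve a small inversion problem at $s_0$ using the unit hypothesis.

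Concretely, set $G(x):=\prod_{s\in S\setminus\{s_0\}}(x-s)^{m(s)}$, which has degree $d(S)-m(s_0)$. By Lemma \ref{lem:h_uniq}, any polynomial divisible by $G(x)$ already satisfies $h_u(s)=0$ for every $s\in S\setminus\{s_0\}$ and every $0\le u<m(s)$. So I would look for $h^{(s_0,u_0)}$ in the form
\[
h^{(s_0,u_0)}(x)\ =\ (x-s_0)^{u_0}\,G(x)\,q(x),
\]
where $q(x)\in R[x]$ is a correction factor to be determined. The factor $(x-s_0)^{u_0}$ automatically kills the Taylor coefficients at $s_0$ of order $<u_0$, so the remaining conditions at $s_0$ reduce to
\[
G(x)\,q(x)\ \equiv\ 1\pmod{(x-s_0)^{m(s_0)-u_0}}.
\]

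The key point is that $G(s_0)=\prod_{s\ne s_0}(s_0-s)^{m(s)}$ is a product of units, hence a unit in $R$. Writing the Taylor expansion $G(x)=\sum_{j\ge 0}G_j(x-s_0)^j$ with $G_0=G(s_0)$, I would determine $q(x)=\sum_{k=0}^{m(s_0)-u_0-1}q_k(x-s_0)^k$ recursively by the identities $\sum_{j+k=\ell}G_jq_k=\delta_{\ell,0}$ for $0\le \ell<m(s_0)-u_0$; this gives $q_0=G_0^{-1}$ and $q_\ell=-G_0^{-1}\sum_{k=0}^{\ell-1}G_{\ell-k}q_k$ for $\ell\ge 1$, all well-defined in $R$ because $G_0$ is a unit. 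This is the generalized Hermite interpolation step, and it is really the only place where the unit hypothesis on $S-S$ is used.

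Finally, I would verify the bookkeeping: the degree bound is
\[
\deg h^{(s_0,u_0)}\ \le\ u_0+\bigl(d(S)-m(s_0)\bigr)+\bigl(m(s_0)-u_0-1\bigr)\ =\ d(S)-1<d(S),
\]
the vanishing at each $s\ne s_0$ follows from divisibility by $G(x)$ via Lemma \ref{lem:h_uniq}, and the Taylor coefficients at $s_0$ match by construction (the congruence $Gq\equiv 1$ says that after dividing out $(x-s_0)^{u_0}$, the $(u-u_0)$-th Taylor coefficient at $s_0$ is $\delta_{u,u_0}$ for $u_0\le u<m(s_0)$). I expect no serious obstacle here; the only thing to handle carefully is the recursive solvability of the inversion, which is precisely where the assumption that nonzero differences in $S-S$ are units enters.
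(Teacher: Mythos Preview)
Your proof is correct and follows essentially the same approach as the paper: both write the desired polynomial as $(x-s_0)^{u_0}$ times $G(x)=\prod_{s\ne s_0}(x-s)^{m(s)}$ times a correction factor, using that $G(s_0)$ is a unit to determine that correction. The only difference is bookkeeping---you invert $G$ directly modulo $(x-s_0)^{m(s_0)-u_0}$ via a power-series recursion, whereas the paper performs the equivalent correction by a downward induction on $u_0$, subtracting off the previously constructed $h^{(s_0,u)}$ for $u>u_0$.
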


\begin{proof}
  For every $v=0,1,\dots,m(s_0)-1$ let
  $$
  f^{(v)}(x) =
  (x-s_0)^{v} \prod_{s\in S\setminus\{s_0\}}
  \left(\dfrac{x-s}{s_0-s}\right)^{m(s)}.
  $$
  These auxiliary polynomials have the following obvious properties:

  \begin{itemize}
  \item For every $s\in S\setminus\{s_0\}$ and every $v$, the
    polynomial $f^{(v)}(x)$ is divisible by $(x-s)^{m(s)}$, so
    $f^{(v)}_u(s)=0$ for all $0\le u<m(s)$.
  \item  For every $0\le u<v$, since $f^{(v)}(x)$ is divisible by
    $(x-s_0)^{v}$, we have $f^{(v)}_u(s_0)=0$.
  \item For every $v$ we have $f^{(v)}_v(s_0)=1$.
  \end{itemize}
  
  Now we can construct $h^{(s_0,u_0)}_u(s)$ as a linear combination of
  the auxiliary polynomials, inductively: if
  $h^{(s_0,u_0+1)},\ldots, h^{(s_0,m(s_0)-1)}$ are already defined
  then let
  $$
  h^{(s_0,u_0)}(x) = f^{(u_0)}(x) - \sum_{u_0<u<m(s_0)}
  f^{(u_0)}_u(s_0) \cdot h^{(s_0,u)}(x).
  $$
\end{proof}


\begin{lemma}[Hermite interpolation]
  \label{lem:h_int}
  Let $(S,m)$ be a multiset in a commutative ring $R$ such that all
  nonzero elements $s-s^*$ in $S-S$ are units. For each $s\in S$ and
  $0\le u<m(s)$, let $y_{s,u}$ be an arbitrary element in $R$. Then

  (a) there exists a unique polynomial $f(s)\in R[x]$, with $\deg
  f<d(S)$, satisfying $f_u(s)=y_{s,u}$ for every pair $(s,u)$;

  (b) this polynomial can be constructed as
  $$
  f = \sum_{s\in S} \sum_{u<m(s)} y_{s,u} h^{(s,u)}. 
  $$
\end{lemma}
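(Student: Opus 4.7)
The plan is to treat existence and uniqueness separately, using the two previous lemmas as the engine.

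For existence, I would simply \emph{define} $f$ by the formula in part (b) and verify the required properties directly. Each $h^{(s,u)}$ has degree strictly less than $d(S)$ by Lemma \ref{lem:h_base_pol}, so the linear combination $f=\sum_{s\in S}\sum_{u<m(s)} y_{s,u}h^{(s,u)}$ also has degree less than $d(S)$. To check the interpolation conditions, I would fix a pair $(s_0,u_0)$ with $s_0\in S$ and $0\le u_0<m(s_0)$, and use the linearity of the operation $g\mapsto g_u(s)$ (which is immediate from the expansion \eqref{kifejt}) together with the explicit "delta" values of $h^{(s,u)}_{u_0}(s_0)$ given by Lemma \ref{lem:h_base_pol}. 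All terms in the sum vanish except the one corresponding to $(s_0,u_0)$, yielding $f_{u_0}(s_0)=y_{s_0,u_0}$. This proves both (a) (existence of such an $f$) and (b) simultaneously.

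For uniqueness, I would suppose that two polynomials $f^{(1)},f^{(2)}\in R[x]$ of degree less than $d(S)$ both satisfy $f^{(j)}_u(s)=y_{s,u}$ for all admissible $(s,u)$. Setting $f=f^{(1)}-f^{(2)}$, the linearity of coefficient extraction gives $f_u(s)=0$ for every $s\in S$ and $0\le u<m(s)$. Now Lemma \ref{lem:h_uniq} applies to $f$ and the polynomial $g(x)=\prod_{s\in S}(x-s)^{m(s)}$: condition (a) of that lemma holds, hence so does (c), i.e.\ $g\mid f$ in $R[x]$.

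The only subtle point is the finishing step: concluding from $g\mid f$ and $\deg f<d(S)=\deg g$ that $f=0$, over a general commutative ring $R$ where zero divisors may be present. The crucial observation is that $g$ is \emph{monic}, so if $f=g\cdot q$ with $q\ne 0$, then the leading coefficient of $gq$ equals the leading coefficient of $q$ and $\deg(gq)=\deg g+\deg q\ge d(S)$, contradicting $\deg f<d(S)$. Hence $q=0$ and $f=0$, giving $f^{(1)}=f^{(2)}$. I expect this monicness argument to be the only place one must pause, since all other steps are mechanical consequences of Lemmas \ref{lem:h_uniq} and \ref{lem:h_base_pol}.
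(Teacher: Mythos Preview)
Your proposal is correct and follows essentially the same route as the paper: define $f$ by the formula in (b), check the interpolation conditions via linearity and the delta property of the $h^{(s,u)}$, and for uniqueness apply Lemma~\ref{lem:h_uniq} to the difference and use that $g$ is monic of degree $d(S)$ to force it to be zero. The paper's proof is identical in structure, including the observation that the leading coefficient of $g$ is $1$ to handle the degree argument over a ring with zero divisors.
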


\begin{proof}
  Let $f = \sum\limits_{s\in S} \sum\limits_{u<m(s)} y_{s,u}
  h^{(s,u)}$. For every $s\in S$ and $u<m(s)$ we have
  $$
  f_u(s)
  = \sum_{r\in S} \sum_{v<m(r)} y_{r,v} h^{(r,v)}_u(s)
  = \sum_{r\in S} \sum_{v<m(r)} \left\{ \begin{matrix}
      1 & \text{if $r=s$ and $v=u$} \\
      0 & \text{otherwise} \\
    \end{matrix} \right\} y_{r,v}
  = y_{s,u}
  $$
  so the polynomial $f$ satisfies the requested property.

  \medskip

  For the uniqueness, suppose that there exists another polynomial
  $f^*$ with the same property. Then, for all $s\in S$ and $u<m(s)$ we
  have $(f-f^*)_u(s) = f_u(s)-f^*_u(s) = 0$. By
  Lemma~\ref{lem:h_uniq}, this implies that $f-f^*$ is divisible by
  $\prod\limits_{s\in S}(x-s)^{m(s)}$. Since the degree of the latter
  polynomial is $d(S)$ and its leading coefficient is $1$, this
  contradicts $\deg(f-f^*)<d(S)$.
\end{proof}


\begin{lemma}
  \label{lem:fo_eho}
  Let $(S,m)$ be a multiset in a commutative ring $R$ such that all
  nonzero elements $s-s^*$ in $S-S$ are units, and let
  $t=d(S)-1$. Then there exist elements $\alpha(s,u)\in R$ for all
  $s\in S$ and $0\le u<m(s)$ with the following property: for every
  $\ell\ge0$,
  $$
  \sum_{s\in S} \sum_{0\le u<m(s)} \alpha(s,u) \binom{\ell}{u} s^{\ell-u}
  = \begin{cases}
    0 & \text{if $\ell<t$;} \\
    1 & \text{if $\ell=t$;} \\
    * & \text{if $\ell>t$.} \\
  \end{cases}
  $$
  (Here the symbol $*$ means ``undetermined''.)
\end{lemma}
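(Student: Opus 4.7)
The plan is to define the coefficients $\alpha(s,u)$ as the coefficients of $x^t$ in the Hermite basis polynomials $h^{(s,u)}(x)$ furnished by Lemma~\ref{lem:h_base_pol}. Since $\deg h^{(s,u)} < d(S) = t+1$, each such coefficient is a well-defined element of $R$, and the whole family $\alpha(s,u)$ is read off mechanically from the already-constructed basis.

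To verify the required identity for a given $0 \le \ell \le t$, I would apply the interpolation formula of Lemma~\ref{lem:h_int}(b) to the polynomial $p(x) = x^\ell$, which has degree at most $t < d(S)$, so the hypothesis on the degree is satisfied. The key computational observation is the binomial expansion $x^\ell = \bigl((x-s)+s\bigr)^\ell$, which gives $p_u(s) = \binom{\ell}{u} s^{\ell-u}$ (in particular this is $0$ whenever $u > \ell$). Lemma~\ref{lem:h_int}(b) therefore yields the polynomial identity
$$x^\ell = \sum_{s\in S} \sum_{0 \le u < m(s)} \binom{\ell}{u} s^{\ell-u} \cdot h^{(s,u)}(x).$$
Comparing the coefficient of $x^t$ on both sides: the left-hand side contributes $1$ when $\ell = t$ and $0$ when $\ell < t$, while the right-hand side contributes $\sum_{s,u} \alpha(s,u) \binom{\ell}{u} s^{\ell-u}$ by the very definition of $\alpha(s,u)$. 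This is exactly the required identity.

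For $\ell > t$ the monomial $x^\ell$ exceeds the degree bound for Lemma~\ref{lem:h_int}, so our construction imposes no constraint on the sum; but the statement only demands $*$ (an arbitrary element of $R$) in that case, so nothing remains to verify. I do not foresee a serious obstacle: once the identification $p_u(s) = \binom{\ell}{u} s^{\ell-u}$ is made, the argument reduces to reading off a single coefficient from a Hermite expansion, and all the nontrivial ring-theoretic content (the invertibility of differences $s-s^*$) has already been absorbed into the construction of the $h^{(s,u)}$ in Lemma~\ref{lem:h_base_pol}.
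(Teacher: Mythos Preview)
Your proposal is correct and follows essentially the same approach as the paper: define $\alpha(s,u)$ as the coefficient of $x^t$ in $h^{(s,u)}$, use the binomial expansion to identify $(x^\ell)_u(s)=\binom{\ell}{u}s^{\ell-u}$, apply Lemma~\ref{lem:h_int} to write $x^\ell$ in terms of the $h^{(s,u)}$ for $\ell\le t$, and then compare coefficients of $x^t$. The paper's proof is virtually identical in structure and detail.
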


\begin{proof}
  Take the polynomials $h^{(s,u)}(x)$ provided by
  Lemma~\ref{lem:h_base_pol}, and let $\alpha(s,u)$ be the coefficient of
  $x^t$ in the polynomial $h^{(s,u)}(x)$.

  For every $s\in S$ we have
  $$
  x^\ell = \big(s+(x-s)\big)^\ell
  = \sum_{u=0}^\infty \binom{\ell}{u}s^{\ell-u} (x-s)^u,
  $$
  and therefore
  $$
  \big(x^\ell\big)_u(s) = \binom{\ell}{u}s^{\ell-u}
  $$
  for every $u\ge0$. (For $u>\ell$ we have $\binom{\ell}{u}=0$ and the
  negative exponent in $s^{\ell-u}$ does not matter.)
  
  Now take an arbitrary $\ell<d(S)$, and apply Lemma~\ref{lem:h_int}
  to the values $y_{s,u}=\binom{\ell}{u}s^{\ell-u}$. From these
  values, Lemma~\ref{lem:h_int} reconstructs the polynomial $x^\ell$:
  $$
  \sum_{s\in S} \sum_{u<m(s)} \binom{\ell}{u}s^{\ell-u} h^{(s,u)}(x) = x^\ell.
  $$

  Comparing the coefficients of $x^t$, we get
  $$
  \sum_{s\in S} \sum_{u<m(s)} \alpha(s,u) \binom{\ell}{u}s^{\ell-u}
  = \begin{cases}
    0 & \text{if $\ell<t$;} \\
    1 & \text{if $\ell=t$.} \\
  \end{cases}
  $$
\end{proof}


\begin{proof}[Proof of Theorem \ref{modulo_multi}]
  
  Without loss of generality, we may assume that $t_i=d(S_i)-1$ for
  every $i=1,2,\dots,n$.

  Expand $f$ as $f(\ve x)=\sum\limits_{\ve k} c_{\ve k} \ve x^{\ve k}$. For
  every $\ve s\in S$, from
  \begin{gather*}
    f(\ve x)
    =
    \sum_{\ve k} c_{\ve k} \big(\ve s+(\ve x-\ve s)\big)^{\ve k}
    =
    \sum_{\ve k} c_{\ve k} \sum_{\ve u} 
    \left(\prod_{i=1}^n \binom{k_i}{u_i}s_i^{k_i-u_i}\right)
    (\ve x-\ve s)^{\ve u}
    = \\ =
    \sum_{\ve u}
    \left(\sum_{\ve k} c_{\ve k} \prod_{i=1}^n
      \binom{k_i}{u_i}s_i^{k_i-u_i}\right) (\ve x-\ve s)^{\ve u}
  \end{gather*}
  we get
  $$
  f_{\ve u}(\ve s)
  = \sum_{\ve k} c_{\ve k} \prod_{i=1}^n \binom{k_i}{u_i}s_i^{k_i-u_i}.
  $$

  For each $i$, by Lemma~\ref{lem:fo_eho}, there exist elements
  $\alpha_i(s,u)\in R$ for all $s\in S_i$ and $0\le u<m_i(s)$ such
  that
  $$
  \sum_{s\in S_i} \sum_{0\le u<m_i(s)} \alpha_i(s,u) \binom{\ell}{u} s^{\ell-u}
  = \begin{cases}
    0 & \text{if $\ell<t_i$;} \\
    1 & \text{if $\ell=t_i$;} \\
    * & \text{if $\ell>t_i$.} \\
  \end{cases}
  $$
  Now let $\alpha (\ve s,\ve
  u)=\prod\limits_{i=1}^n\alpha_i(s_i,u_i)$ and consider the following
  expression:
  \begin{gather*}
    \sum_{\ve s\in S} \sum_{\ve u<m(\ve s)}
    \alpha(\ve s,\ve u) f_{\ve u}(\ve s)
    =
    \sum_{\ve s\in S} \sum_{\ve u\le m(\ve s)}
    \left(\prod_{i=1}^n\alpha_i(s_i,u_i)\right)
    \sum_{\ve k} c_{\ve k} \prod_{i=1}^n \binom{k_i}{u_i}s_i^{k_i-u_i}
    =
    \\
    =
    \sum_{\ve k} c_{\ve k} 
    \sum_{s_1\in S_1} \sum_{u_1<m_1(s_1)} \ldots
    \sum_{s_n\in S_n} \sum_{u_n<m_n(s_n)}
    \prod_{i=1}^n \left( \alpha_i(s_i,u_i)
      \binom{k_i}{u_i}s_i^{k_i-u_i} \right) 
    =
    \\
    =
    \sum_{\ve k} c_{\ve k} \prod_{i=1}^n \left( 
      \sum_{s_i\in S_i} \sum_{u_i<m_i(s_i)}
      \alpha_i(s_i,u_i) \binom{k_i}{u_i}s_i^{k_i-u_i} \right) 
    =
    \sum_{\ve k} c_{\ve k} \prod_{i=1}^n \left\{
      \begin{matrix}
        0 & \text{if $k_i<t_i$} \\
        1 & \text{if $k_i=t_i$} \\
        * & \text{if $k_i>t_i$} \\
      \end{matrix} \right\}.
  \end{gather*}
  Since $\deg f=t_1+\ldots+t_n$, the last product is zero except for
  $\ve k=\ve t$ (when every factor is $1$). Therefore, we have
  $$
  \sum_{\ve s\in S} \sum_{\ve u<m(\ve s)}
  \alpha(\ve s,\ve u) f_{\ve u}(\ve s)
  =
  c_{\ve t}.
  $$
  On the left-hand side, there stands a linear combination of the
  values $f_{\ve u}(\ve s)$. Since $c_{\ve t}\ne0$ on the right-hand
  side, there must be at least one nonzero among these values.
\end{proof}


\section{An alternative proof for Theorem~\ref{theorem 1}}

Here we intend to give a proof of
Theorem~\ref{theorem 1} which follows closely the original line of reasoning 
from Alon \cite{Alon1}. 

\bigskip

\begin{proof}[Proof of Theorem~\ref{theorem 2}]

We denote by $V$ the $\B$ module of all
functions from $S$ to $\B$.  $V$ is a free 
$\B$ module,
\[\mbox{rank}_{\B}V=|S|=\prod_{i=1}^n d_i,\]
where $|S_i|=\deg g_i=d_i$. In fact, for  $\ve s\in S$ we denote by $f_{(\ve
s)}$ the
$S\longrightarrow \B$ function taking value $1$ at  $\ve s$ and $0$
everywhere else in $S$. Then the set 
$F=\{f_{(\ve s)} | ~ \ve s\in S\}$ is a free generating set of $V$ over $\B$, 
and 
$|F|=\prod_{i=1}^n d_i=|S|$. 
Next we observe, that every $f_{(\ve s)}$ can be
written as a polynomial from $\Bx$, using interpolation. For 
$\ve s=(s_1,\ldots, s_n)\in S$ we have 

\[ f_{(\ve s)}(\ve x)=\prod_{i=1}^n \left( \prod_{\alpha \in S_i, \alpha \neq
s_i}(x_i-\alpha)(s_i-\alpha)^{-1}\right) . \]

Note that since $s_i\not=\alpha \in S_i$, the element
$s_i-\alpha$ is a unit in $\B$, hence 
the definition of $f_{(\ve s)}(\ve x)$ makes sense. 
Consider the following set of monomials

\[M=\{\ve x^{\ve w};~ w_i\leq d_i-1, ~i=1,\dots,n\}.\]

Set $\mathcal{G}=\{g_1(x_1),\dots,g_n(x_n)\}$.
An arbitrary polynomial $f$ from $\Bx$ can be reduced 
with $\mathcal{G}$. This means that an occurrence of the monomial 
$x_i^{d_i}$ is replaced by $-(g_i(x_i)-x_i^{d_i})$ as long as it is possible. 
Note that this reduction does not change $f$ as a function on $S$. Clearly 
any $f$ can be reduced into a $\B$-linear combination of monomials from 
$M$. In particular, the elements of $F$ are reduced this way into a
collection of $|S|$ polynomials which are independent over $\B$. Using 
also that $|M|=|S|$, we infer that $M$, as a set of functions from $S$ to
$\B$, is also linearly independent over $\B$. 

Now consider an arbitrary polynomial $f$ from $\Bx$, and reduce
$f$  with $\mathcal{G}$ as much as possible.
Denote the resulting (reduced) polynomial by $r$, which is a $\B$-linear 
combination of   
monomials from $M$. The fact that $f$ reduces to $r$ means that there are 
polynomials $h_1,\dots,h_n \in
\Bx$ such that $\deg (h_i)\leq \deg f - d_i$ for all $i$, the degree
of $r$ is less than $d_i$ in every $x_i$, and
\[f\x =r\x + \sum_{i=1}^{n} h_i\x g_i(x_i).\]
Because of the linear independence of the monomial functions  from $M$, we
have that $f\in I(S)$ if and only if $r$ is the all zero linear
combination. This concludes the proof. 
\end{proof}

\medskip 

We remark that the proof actually gives that ${\cal G}$ is a Gr\"obner basis
of $I(S)$ with respect to an arbitrary term order on $\Bx$. For 
an introduction to Gr\"obner bases the reader is referred to \cite{AL}.

\medskip

From Theorem \ref{theorem 2} the original argument of Alon gives 
Theorem \ref{theorem 1} quite simply. Below we reproduce Alon's proof for the 
convenience of the reader.

\begin{proof}[An alternative proof of Theorem \ref{theorem 1}] 
Clearly we may assume that $|S_i|=t_i+1$ for all
$i$. Suppose that the result is false, i.e. $f\in I(S)$, and define  
$g_i(x_i)=\prod_{s\in S_i} (x_i-s)$. By Theorem \ref{theorem 2} 
there are polynomials $h_1,\dots,h_n \in \Bx$ such that
$\deg (h_j)\leq \sum_{i=1}^n t_i - \deg (g_j)$ for all $j$, for which

\[f=\sum_{i=1}^n h_i g_i.\]

Here the degree of $h_ig_i$ is at most $deg(f)$, and if there are
any monomials of degree $deg(f)$ in it, then they are divisible by
$x_i^{t_i+1}$. It follows that the coefficient of $\prod_{i=1}^n  
x_i^{t_i}$ on the right hand side is zero. However by our
assumption the coefficient of $\prod_{i=1}^n x_i^{t_i}$ on the
left hand side is nonzero, and this contradiction completes the  
proof. 
\end{proof}

\section{Two applications}

We can extend a result of Alon and F\"uredi \cite{AF} on the  covering of a
discrete cube by hyperplanes in the following way. (The original result 
is the special case when every multiplicity is 1.)

\begin{theorem}
  Let $(S_1, m_1), \ldots ,(S_n,m_n)$ be finite multisets from the
  field $\F$. Suppose that $0\in S_i$, with $m_i(0)=1$ for every $i$,
  and $H_1,\dots,H_k$ are hyperplanes in
  $\F^n$ such that every point $\ve s\in S\setminus\{\ve0\}$ is covered
by at least $|m(\ve s)|-n+1$ hyperplanes and 
the point $\ve0$ is not covered by any of the hyperplanes.
Then $k\geq d(S_1)+d(S_2)+\cdots + d(S_n)-n$.
\end{theorem}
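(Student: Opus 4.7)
The plan is to adapt the classical Alon--F\"uredi argument to the multiplicity setting and derive a contradiction from Theorem~\ref{nonvanish}. Let $\ell_j\x \in \F[\ve x]$ be an affine linear form defining $H_j$; since $\ve 0 \notin H_j$ we may scale so that $\ell_j(\ve 0) = 1$. Set $t_i = d(S_i) - 1$, so the bound to be proved reads $k \geq t_1 + \cdots + t_n$. Suppose for contradiction that $k \leq t_1 + \cdots + t_n - 1$ and introduce
\[ F\x = \prod_{j=1}^k \ell_j\x, \qquad \Phi\x = \prod_{i=1}^n \phi_i(x_i), \qquad \phi_i(x_i) = \prod_{\alpha \in S_i \setminus \{0\}} \left(\frac{x_i-\alpha}{-\alpha}\right)^{m_i(\alpha)}, \]
so that $\phi_i(0) = 1$ and $\deg \phi_i = t_i$. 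The polynomial I would feed to Theorem~\ref{nonvanish} is $G = F - \Phi$.

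First I would check the leading monomial: since $\deg F = k < \sum t_i = \deg \Phi$, $G$ has degree exactly $\sum t_i$ and the coefficient of $x_1^{t_1}\cdots x_n^{t_n}$ comes entirely from $-\Phi$, equalling the nonzero element $-\prod_i \prod_{\alpha \in S_i\setminus\{0\}} (-\alpha)^{-m_i(\alpha)}$. Theorem~\ref{nonvanish} would therefore guarantee some pair $(\ve s, \ve u)$ with $\ve u < m(\ve s)$ and $G_{\ve u}(\ve s) \neq 0$, so the task reduces to showing that $G_{\ve u}(\ve s) = 0$ for every admissible pair.

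For $\ve s = \ve 0$ only $\ve u = \ve 0$ is admissible, and $G(\ve 0) = 1 - 1 = 0$ by construction. For $\ve s \neq \ve 0$, split the coordinates into $A = \{i : s_i = 0\}$ and $B = \{i : s_i \neq 0\}$, noting $B \neq \emptyset$. The separation of variables in $\Phi$ yields $\Phi_{\ve u}(\ve s) = \prod_i (\phi_i)_{u_i}(s_i)$; for any $i \in B$ the polynomial $(x_i-s_i)^{m_i(s_i)}$ divides $\phi_i$, so Lemma~\ref{lem:h_uniq} gives $(\phi_i)_{u_i}(s_i) = 0$ whenever $u_i < m_i(s_i)$, killing the product. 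For $F$, expanding each $\ell_j$ as $\ell_j\x = \ell_j(\ve s) + \ve a_j\cdot(\ve x-\ve s)$ and using the at least $|m(\ve s)|-n+1$ forms with $\ell_j(\ve s)=0$ shows that every Taylor term of $F$ around $\ve s$ has total degree $\geq |m(\ve s)|-n+1$ in $\ve x-\ve s$. A short count using $|A|=n-|B|$ and $|m(\ve s)| = |A|+\sum_{i\in B} m_i(s_i)$ gives $|\ve u| \leq \sum_{i\in B}(m_i(s_i)-1) = |m(\ve s)|-n$ for every admissible $\ve u$, safely below the threshold, hence $F_{\ve u}(\ve s) = 0$. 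Combining the two pieces, $G_{\ve u}(\ve s) = 0$, contradicting Theorem~\ref{nonvanish}.

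The main obstacle is the bookkeeping at points $\ve s\neq\ve0$ having several zero coordinates: naively $\Phi$ fails to vanish in the coordinate directions $i\in A$ since $\phi_i(0)=1$, so one might worry that the subtraction $F-\Phi$ cannot be arranged to match orders of vanishing correctly. This is resolved precisely by the product structure of $\Phi$ in separate variables, which lets a single index $i\in B$ annihilate the whole product $\Phi_{\ve u}(\ve s)$, together with the hypothesis $m_i(0)=1$ that tightens the inequality $|\ve u|\le |m(\ve s)|-n$ just enough to fit under the covering lower bound $|m(\ve s)|-n+1$.
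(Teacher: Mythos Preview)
Your argument is correct and follows essentially the same route as the paper's proof: you build the auxiliary polynomial as the difference between the product of the $\ell_j$ and the separated-variable product $\prod_i\prod_{\alpha\in S_i\setminus\{0\}}(x_i-\alpha)^{m_i(\alpha)}$, normalized so the two agree at $\ve 0$, and then invoke Theorem~\ref{nonvanish}; the paper writes this as $P(\ve x)-\frac{P(\ve 0)}{f(\ve 0)}f(\ve x)$ without pre-scaling, while you normalize $\ell_j(\ve 0)=1$ and $\phi_i(0)=1$ to get $G=F-\Phi$, but the two constructions differ only by a nonzero scalar. Your bookkeeping at $\ve s\neq\ve 0$ via the split $A\cup B$ and the inequality $|\ve u|\le |m(\ve s)|-n$ is a slightly more explicit version of the paper's one-line parenthetical justification.
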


\begin{proof} Let  $\ell_j(\ve x)$ be a
linear polynomial defining the hyperplane
  $H_j$, set  $f(\ve x)=\prod\limits_{j=1}^k\ell_j(\ve x)$, and
$t_i=d(S_i)-1$.

  Let
  $$ 
  P(\ve x) = \prod_{i=1}^n\prod_{s\in S_i\setminus \{0\}}(x_i-s)^{m_i(s)}
  $$ 
  and
  $$ 
  F(\ve x) = P(\ve x) - \frac{P(\ve0)}{f(\ve0)}f(\ve x).
  $$
  Note that we have $f(\ve0)\ne0$, because the
  hyperplanes do not cover $\ve0$.
  If the statement is false, then the degree of $F$ is $t_1+t_2+\cdots
  + t_n$ and the coefficient of $x_1^{t_1}\cdots x_n^{t_n}$ is
  $1$. Theorem~\ref{nonvanish} applies  with $(S_1, m_1), \ldots
  ,(S_n,m_n)$ and $t_1,\ldots ,t_n$: there exists a vector $\ve s \in  
  S$, and an exponent vector $\ve u < m(\ve s )$ such that $F_{\ve u}(\ve s)
 \not=0$. 
 We observe that $\ve   
  s$ cannot be $\ve 0$, because $F(\ve 0)=0$. Thus $\ve s$ must have at
  least one nonzero coordinate, implying that
  $ P_{\ve u}(\ve s)=0$. 

Moreover, as $\ve s$ is a nonzero vector,  $f(\ve x)$ must vanish
  at $\ve s$ at least $|m(\ve s)|-n+1$ times, implying that 
  $f_{\ve u}(\ve s)=0$  (expand the product at $\ve s$; for
  every term $(\ve x -\ve s )^{\ve v} $ obtained there will be an
  index $j$ such that $v_j\geq m_j(s_j)$). These facts 
imply that $F_{\ve u}(\ve s)=0$, a contradiction. This finishes the proof.
\end{proof}

Next, as an application of Theorem \ref{theorem 1}, we present a
generalization of Theorem 6.3. from \cite{Alon1} to the Boolean cube
over a commutative ring $\B$. By a hyperplane $H$ in $\B^n$ we understand 
the set of zeros of a polynomial of the form $a_1x_1+\cdots +a_nx_n-b:=(\ve
a, \ve x) -b$, where $a_i,b\in \B$.

\begin{theorem}\label{theorem 3}
Let $\B$ be a commutative ring, and let  $H_1,\ldots ,H_m$ be 
hyperplanes in $\B^n$ such that 
$H_1,\dots,H_m$ cover all the vertices of the unit cube
$\{0,1\}^n\subseteq \B^n$, with the exception of $\ve 0$.  
Let $(\ve a^i,\ve x)-b_i$ be the polynomial defining $H_i$. 
If $\prod_{i=1}^m b_i\neq 0$, 
then $m\geq n$.
\end{theorem}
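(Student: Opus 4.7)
The plan is to mimic the classical proof of Theorem 6.3 in \cite{Alon1}, with Theorem~\ref{theorem 1} replacing the field version of the nonvanishing principle. First I would argue by contradiction, assuming $m\le n-1$, and construct a single polynomial $f\in\B[x_1,\dots,x_n]$ which vanishes on the entire Boolean cube $\{0,1\}^n$ but has a nonzero coefficient at the monomial $x_1x_2\cdots x_n$. Applying Theorem~\ref{theorem 1} with $S_i=\{0,1\}$ and $t_i=1$ for every $i$ will then produce a vertex of the cube at which $f$ does not vanish, which is the desired contradiction. Note that the hypothesis of Theorem~\ref{theorem 1} is satisfied trivially: the only nonzero difference $s-s^*$ in $S_i-S_i$ is $\pm 1$, which is a unit in any commutative ring.

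The key construction is the polynomial
\[
f(\ve x)\;=\;\prod_{i=1}^m\bigl((\ve a^i,\ve x)-b_i\bigr)\;+\;C\prod_{j=1}^n(x_j-1),
\]
where $C\in\B$ is a scalar chosen so that $f(\ve 0)=0$. Evaluating at $\ve 0$, one finds $C=(-1)^{m-n+1}\prod_{i=1}^m b_i$, which is a well-defined element of $\B$ (no division required). By hypothesis the first product vanishes at every $\ve s\in\{0,1\}^n\setminus\{\ve0\}$ because one of the hyperplanes covers $\ve s$, while the second product vanishes at every vertex other than $\ve 0$ because some $x_j-1$ factor is zero. The choice of $C$ kills the joint value at $\ve 0$, so $f$ vanishes on all of $\{0,1\}^n$.

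Next I would read off the coefficient of $x_1x_2\cdots x_n$ in $f$. The first summand has degree at most $m\le n-1$, so it contributes nothing to this monomial; the second summand contributes exactly $C$. Hence the coefficient of $x_1x_2\cdots x_n$ equals $(-1)^{m-n+1}\prod b_i$, which is nonzero by the assumption $\prod b_i\ne 0$ (signs are units in $\B$). In particular $\deg f=n=\sum_i t_i$ with $t_i=1$. Theorem~\ref{theorem 1} now applies and produces a point $\ve s\in\{0,1\}^n$ with $f(\ve s)\ne 0$, contradicting that $f$ vanishes on the cube.

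The only step that requires any care is verifying that $C$ can be constructed inside $\B$ without using inverses that may not exist, and this is handled automatically by the fact that the constant term of $\prod_{j=1}^n(x_j-1)$ is $(-1)^n$, a unit; so the main ``obstacle'' is really just bookkeeping of signs. Everything else is a direct transcription of Alon's original argument, made legal over an arbitrary commutative ring by Theorem~\ref{theorem 1}.
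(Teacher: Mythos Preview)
Your argument is correct and is essentially identical to the paper's own proof: the polynomial you build is the negative of the paper's $P(\ve x)=(-1)^{n+m+1}\prod_j b_j\prod_i(x_i-1)-\prod_i[(\ve a^i,\ve x)-b_i]$, and the contradiction via Theorem~\ref{theorem 1} with $S_i=\{0,1\}$, $t_i=1$ is the same. Your explicit remark that the differences in $S_i-S_i$ are $\pm1$ and hence units is a useful addition that the paper leaves implicit.
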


\begin{proof} The proof is essentially the same as the one in
\cite{Alon1}. Assume that the assertion is false: $m<n$, and
consider the polynomial

\[P\x=(-1)^{n+m+1} \prod_{j=1}^m b_j 
\prod_{i=1}^n(x_i-1)-\prod_{i=1}^m[(\ve
a ^i,\ve x)-b_i]. \]
The degree of this polynomial is clearly $n$, and the coefficient of
$\prod_{i=1}^nx_i$ in $P$ is $(-1)^{n+m+1}\prod_{j=1}^m b_j$, which is
nonzero by our assumption. By applying Theorem \ref{theorem 1}   
to $S_i=\{0,1\}$, $t_i=1$, we obtain a point $\ve s\in \{0,1\}^n$ for
which $P(\ve s) \neq 0$. This point is not the all zero vector, as $P$  
vanishes on $\ve 0$. 
But otherwise $(\ve a_i,\ve s)-b_i=0$ for some $i$ (as $\ve s$ 
is covered by an $H_i$), implying that $P$ does vanish on
$\ve s$, a contradiction. 
\end{proof}

Alon and F\"uredi have obtained the preceding 
statement for the case
$\B =\F$,  using the original nonvanishing argument.
If we put $\B=\mathbb Z_n$ for some square free integer $n\in
\mathbb{N}$, then an application of the original
nonvanishing theorem  for a suitable  prime factor of $n$ proves the 
statement. However, if $n$ has square factors, then Theorem \ref{theorem 3} 
appears to give a new result.

\section{Concluding remarks}

Our interest in developing a version of the nonvanishing theorem for
multisets has grown out of an attempt to prove Snevily's conjecture
\cite{Snevily} in a
particular case.

Let $G$ be a finite group of odd order and suppose that
$a_1,\dots,a_k\in G$ are pairwise distinct and $b_1,\dots,b_k\in G$
are pairwise distinct. Snevily's Conjecture states that there is a
permutation $\pi$ of the indices $1,2,\dots, k$ for which
$a_1b_{\pi(1)},a_2b_{\pi(2)},\dots,a_kb_{\pi(k)}$ are pairwise
distinct. The conjecture has been proved for cyclic groups of prime
order by Alon \cite{Alon3}, for cyclic groups by Dasgupta, K\'arolyi, Serra,
and Szegedy \cite{DKSSz}
and recently for all commutative groups by Arsovski \cite{Arsovski}.
Note that Alon's proof, which is based on the Combinatorial
Nullstellensatz, allows one of the sequences $a_1,\dots,a_k$ and
$b_1,\dots,b_k$ to contain repeated elements when $k<|G|$.

\medskip

Our approach was the following. Let $N$ be a normal subgroup of $G$.
We look for the permutation to be applied to the factor group $G/N$
first. Of course, in the factor group, some of the cosets
$a_1N,\dots,a_nN$ and $b_1N,\dots,b_nN$ may coincide to each other; it
is even possible that $a_1N,\dots,a_nN$ all coincide and   
$b_1N,\dots,b_nN$ all coincide. So we must allow
$(a_iN)(b_{\pi(i)}N)=(a_jN)(b_{\pi(j)}N)$ in some cases.

The idea is to allow $(a_iN)(b_{\pi(i)}N)=(a_jN)(b_{\pi(j)}N)$ only in
those cases, when $a_iN=a_jN$ and $b_{\pi(i)}N=b_{\pi(j)}N$ holds. Suppose
that we found such a permutation $\pi$. If some classes
$(a_{i_1}N)(b_{\pi(i_1)}N),(a_{i_2}N)(b_{\pi(i_2)}N),
\dots,(a_{i_\ell}N)(b_{\pi(i_\ell)}N)$
coincide, then the elements $a_{i_1},\dots,a_{i_\ell}$ are all in the same
coset $cN$, and similarly $b_{\pi(i_1)},\dots,b_{\pi(i_\ell)}$ are in
the same coset $Nd$. Then we could apply Snevily's Conjecture
inductively to the sequences $c^{-1}a_{i_1},\dots,c^{-1}a_{i_\ell}$  
and $b_{\pi(i_1)}d^{-1},\dots,b_{\pi(i_\ell)}d^{-1}$ which all lie in
$N$. By the induction hypothesis, the values
$\pi(j_1),\dots,\pi(j_\ell)$ can be permuted in such a way, that
$c^{-1}a_{i_1}b_{\pi(i_1)}d^{-1},\dots,
c^{-1}a_{i_\ell}b_{\pi(i_\ell)}d^{-1}$
are pairwise distinct. Hence the elements
$a_{i_1}b_{\pi(i_1)},\dots,a_{i_\ell}b_{\pi(i_\ell)}$ are all in the
coset $(cd)N=cNd$, and they will be pairwise distinct.

If we choose $N$ to be a maximal normal subgroup of $G$, then the order
$|G/N|=p$ will be a prime ($G$ is solvable, since $|G|$ is odd). In  
this case we can use the additive group of the prime field 
$\Fp$ and re-formulate the 
existence of the desired permutation.

\begin{conjecture}
  \label{conj}
  Suppose that $k\le p$ and $a_1,\dots,a_k\in\Fp$ and
  $b_1,\dots,b_k\in\Fp$ are arbitrary elements. Then there is a
  permutation $\pi$ of the indices $1,2,\dots, k$ with the following
  property: $a_i=a_j$ and $b_{\pi(i)}=b_{\pi(j)}$ holds whenever   
  $a_i+b_{\pi(i)}=a_j+b_{\pi(j)}$.
\end{conjecture}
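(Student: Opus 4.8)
The plan is to reduce Conjecture~\ref{conj} to a purely combinatorial partition statement and attack that. Grouping the indices according to the value of $a_i$, the property demanded of $\pi$ is equivalent to the following: $a_i+b_{\pi(i)}\ne a_j+b_{\pi(j)}$ whenever $a_i\ne a_j$ (for $a_i=a_j$ the implication holds automatically). Let $\alpha_1,\dots,\alpha_r$ be the distinct values occurring among $a_1,\dots,a_k$, let $\mu_l$ be the number of indices $i$ with $a_i=\alpha_l$, and let $B$ denote the multiset $\{b_1,\dots,b_k\}$. Writing $B_l$ for the sub-multiset of $B$ that $\pi$ assigns to the block $\{i:a_i=\alpha_l\}$, one checks that $\pi$ has the required property exactly when $B=B_1\uplus\dots\uplus B_r$, $|B_l|=\mu_l$ for all $l$, and the translated supports $\alpha_l+\operatorname{supp}(B_l)$, $l=1,\dots,r$, are pairwise disjoint. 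So the conjecture is equivalent to: such a partition of $B$ exists whenever $k<p$. (The strict inequality is essential. For $k=p$, with $p$ odd, take the $a_i$ to run over all of $\Fp$ and the $b_i$ to be any multiset with $\sum_ib_i\ne0$; then $\sum_i(a_i+b_{\pi(i)})=\sum_ib_i\ne0$, so the values $a_i+b_{\pi(i)}$ cannot be pairwise distinct, while --- the $a_i$ being distinct --- no coincidence among them is ever forced.)

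The small cases of the combinatorial statement can be settled by hand. For $r=1$ there is nothing to do. For $r=2$, put $\delta=\alpha_2-\alpha_1\ne0$; since $k<p$ the set $\operatorname{supp}(B)$ is a proper subset of $\Fp$, so it breaks into ``arcs'' along the cycle $v\mapsto v+\delta$. A brief analysis of the constraint $\bigl(\alpha_1+\operatorname{supp}(B_1)\bigr)\cap\bigl(\alpha_2+\operatorname{supp}(B_2)\bigr)=\emptyset$, carried out one arc at a time, shows that within each arc the copies must be distributed so that, reading the arc in the $\delta$-direction, all copies going to $B_1$ precede all copies going to $B_2$ (with at most one value of the arc split between the two); hence each arc can contribute to $B_1$ any number of its copies between $0$ and its size, and it remains only to make these contributions sum to $\mu_1$, which is possible because $\mu_1\le k$.

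For general $r$ the plan is to produce the partition by a transportation (flow) argument. Transportation plans between the colours $1,\dots,r$ with supplies $\mu_l$ and the values $v\in\operatorname{supp}(B)$ with demands equal to the multiplicities of $v$ in $B$ always exist; what one needs is a plan whose support --- the set of pairs $(l,v)$ such that some copy of $v$ is placed in $B_l$ --- meets each ``diagonal'' $\{(l,v):\alpha_l+v=w\}$ ($w\in\Fp$) in at most one cell, this being exactly the disjointness of the translated supports. The Cauchy--Davenport bound $|W+\{\alpha_1,\dots,\alpha_r\}|\ge\min\bigl(p,\,|W|+r-1\bigr)\ge|W|$, valid for every $W\subseteq\Fp$, furnishes the Hall-type inequalities one would get after selecting one cell in each diagonal; the hard part --- and the reason the statement is still only conjectural --- is to choose these cells and route the plan simultaneously, i.e.\ to realize a diagonal-respecting transportation plan with the prescribed margins.

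One may also try to attack Conjecture~\ref{conj} directly with the tools of this paper, applying Theorem~\ref{nonvanish} over $\Fp$ with each $(S_i,m_i)$ equal to the multiset $B$ and with the polynomial $F(\ve x)=\prod_{i<j,\;a_i\ne a_j}\bigl((a_i+x_i)-(a_j+x_j)\bigr)$. The obstruction here is that $F$ is not in the ``corner monomial'' form required by the theorem --- one has $\deg F<\sum_i\deg_{x_i}F$ as soon as $r\ge2$ --- so one must first multiply $F$ by a factor forcing the chosen tuple to be an honest rearrangement of $B$ (a substitute, when the $b_i$ repeat, for the Vandermonde $\prod_{i<j}(x_i-x_j)$ used by Alon when the $b_i$ are distinct), and then prove that the coefficient of the resulting corner monomial is nonzero in $\Fp$. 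This is a permanent-type nonvanishing identity of the kind underlying every Snevily-type result, and carrying it out --- necessarily making essential use of $k<p$ --- is where we expect the principal difficulty to lie.
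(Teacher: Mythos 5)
The paper states Conjecture~\ref{conj} as an open problem and gives no proof, so there is nothing in the paper to compare against; you correctly treat it as conjectural and offer analysis rather than a claimed proof. Your reformulation into a partition $B=B_1\uplus\dots\uplus B_r$ with $|B_l|=\mu_l$ and pairwise disjoint translated supports $\alpha_l+\operatorname{supp}(B_l)$ is correct, as is your $r=2$ arc analysis.

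Your most substantive observation is the counterexample at $k=p$: taking $a_1,\dots,a_p$ to run over all of $\Fp$ (so all distinct, forcing the sums $a_i+b_{\pi(i)}$ to be pairwise distinct, hence to enumerate $\Fp$ and sum to $0$) together with any $b_1,\dots,b_p$ having $\sum b_i\ne0$ gives $\sum_i(a_i+b_{\pi(i)})=\sum_i b_i\ne0$, a contradiction. I checked this directly for $p=3$, $a=(0,1,2)$, $b=(0,0,1)$. So the conjecture as printed (with $k\le p$) is false; the inequality must be made strict, or a side condition imposed at $k=p$. This is a genuine catch and worth flagging. The general case $r\ge 3$ remains open, as you say; neither the transportation route (Cauchy--Davenport supplies the Hall-type marginal inequalities but not a diagonal-respecting plan) nor the polynomial route (you rightly note that $\deg F=\tfrac12\sum_i\deg_{x_i}F$ blocks a direct application of Theorem~\ref{nonvanish} once $r\ge2$) is carried to a proof. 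An honest and well-informed assessment, but, consistent with the status of the statement, not a proof.
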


This conjecture would imply Snevily's conjecture for all cases when
$k$ is not greater than the smallest prime divisor of $|G|$.

\medskip

To use the polynomial method to prove Conjecture~\ref{conj} and
similar statements, it appears that one is required to  
handle multiple values in the
sequences $a_1,\dots,a_k$ and $b_1,\dots,b_k$.

\bigskip
The requirements in Theorems \ref{theorem 1} and 
\ref{modulo_multi} about the invertibility in $\B$ of the differences 
$s-s^*$ can be somewhat relaxed. The theorems still hold if we assume, 
instead of
invertibility, that the multiplicative monoid $D\subset \B$ generated by 
the differences $s-s^*$ and $f_{(t_1,\ldots ,t_n)}$ does not contain 0. In
this case the map from $\B$ to its ring of fractions  $D^{-1}\B$ lifts our 
proofs from  $D^{-1}\B$ to $\B$ (the reader is referred to \cite{AM} for
basic facts on rings of fractions). Actually, it suffices to assume that 
a certain specific product from $D$ is not zero.


\end{document}